\documentclass[11pt,a4paper]{amsart}
\usepackage[utf8]{inputenc}

\usepackage{doc}
\def\thanks#1{{\let\thefootnote\relax\footnote{#1.}\setcounter{footnote}{0}}}
\makeatletter
\newcommand\thankssymb[1]{\textsuperscript{\@fnsymbol{#1}}}

\usepackage{amssymb}
\usepackage{amscd}
\usepackage{amsmath}
\usepackage{eucal}
\usepackage{graphicx}
\usepackage{verbatim}
\usepackage{amsthm}
\usepackage{hyperref}
\usepackage{mathrsfs}
\usepackage{color}
\usepackage{multicol}
\usepackage{tikz}
\usepackage{float}
\usepackage{wrapfig}
\usepackage{geometry}
\usepackage{tikz-cd} 
\usetikzlibrary{arrows,intersections}
\usetikzlibrary{patterns}
\usepackage{epigraph}
\usepackage{capt-of}

\hypersetup{
    colorlinks=true,
    %linkcolor=blue,
    %urlcolor=red,
    %linktoc=all,
    citecolor=blue
           }

\newcommand{\Mod}[1]{\ (\mathrm{mod}\ #1)}

\newtheorem{theorem}{Theorem}
\newtheorem{lemma}{Lemma}

\theoremstyle{definition}
\newtheorem{definition}{Definition}

\theoremstyle{remark}

\title{Modular Arrangements}
\author{A. Levin %\thankssymb{1}
} \author{N. Sakharova %\thankssymb{2}
}
\date{}

\begin{document}
%\centerline {\bf Modular Arrangements }
\maketitle
%\thanks{\thankssymb{1} The  author was supported by the Basic Research Program of the National Research University Higher School of Economics} 
%\thanks{\thankssymb{2} The  author was supported by the Basic Research Program of the National Research University Higher School of Economics}
\section{Introduction}

The modular curves are fine objects for testing conjectures in the arithmetic geometry. They have natural geometric definition in contrast with rather nontrivial structure. At the other hand, they are very good studied from the number-theoretical viewpoint. In addition,  there is well developed powerful analytical technique.
We shall use the square of the modular curve as the experimental object for study arithmetical properties of the periods of the mixed Hodge structures. There is extra reason for this problem: this square is naturally equipped by the family of (Hecke) curves. They are components of the Neron-Severi locus for interpretation of the square of modular curves as moduli space of splitted abelian surfaces (namely, a product of two elliptic curves).

In the article we construct explicitly meromorphic differential forms with logarithmic singularities along the Hecke curves. For this we use two ideas: 

First, the maps between curves of different levels. For suitable combination of this maps the Hecke curve is presented as the image of the diagonal.

Second, the realization of the Modular Cauchy kernel as Zagier series.

For the simplest case we reduce some period of geometrical mixed Hodge structure. This reduction is based on the reciprocity low map according to Rudenko \cite{Ru15}.
  
\section{1-2-0 of the  Elliptic Modularity}
\subsection{The Generalities of the Modular Curves.} 

Fix some notations. The upper half-plane ${\Bbb H}=\{z\in \Bbb C~|~\Im(z)>0\}$,
$$\Gamma={\rm SL}_2(\Bbb Z)=\left\{\left.\left(\begin{matrix}a&b\\c&d\end{matrix}\right)\right|ad-bc=1\right\}.$$
For any matrix $M=\left(\begin{matrix}a&b\\c&d\end{matrix}\right)$ denote $M^*= \left(\begin{matrix}d&-b\\-c&a\end{matrix}\right). $
The modular curve $Y$ should be the space of orbits of linear fractional  action of the group $\Gamma$ on the upper half-plane (f.e. as orbifold). So we can work with objects on $\Bbb H$, which are compatible with the action of $\Gamma$.

The automorphic factor $cz+d$ is a 1-cocycle of $\Gamma$ with values in invertible functions on $\Bbb H$, so its powers determine twisted actions of the group $\Gamma$ on the space of function on $\Bbb H$, this corresponds to automorphic forms. Since $\partial\left((az+b)/(cz+d)\right)/\partial z= 1/(cz+d)^2$, so  $\Gamma$-invariant $(1,0)$-differential forms correspond to automorphic forms of the weight $2$. Bellow we call such objects just differential forms on the modular curve.\\

{\em Motivation.} The modular curve appears as the set of discrete 2-generate abelian  subgroups in complex line: $\Bbb Z^2=L\subset \Bbb C$, we shall call this object the {\em lattice}. The choice of the clockwise  generators of $L$ produces pair $(\omega_1,\omega_2)$ in the line $\Bbb C$; pic $\omega_2$ as the frame of $\Bbb C$. Hence  the fraction $z=\omega_1/\omega_2$ is the coordinate of the first generator of $L$. The  change of generators of $L$
$$
\left(\begin{matrix}   
  \omega_1    \\
  \omega_2    
\end{matrix}\right)\to
\left(\begin{matrix}  
 a  &b  \\
 c  & d
\end{matrix}\right)
\left(\begin{matrix}   
  \omega_1    \\
  \omega_2    
\end{matrix}\right)
$$
corresponds to the fractional-linear action of $\Gamma$ on $z$.\\

{\em  Modular Figure.} Put $\Phi^0=\{z\in \Bbb H~|~|z|>1,~ |\Re(z)|<\frac{1}{2}\}$,
$\overline{\Phi}=\{z\in \Bbb H~|~|z|\geq 1, ~|\Re(z)|\leq\frac{1}{2}\}$. Any $\Gamma$-orbit intersect $\overline{\Phi}$; indeed,  pick clockwise  generators of $L$  $(\omega_1,\omega_2)$ by the rule: $\omega_2$ is the shortest nonzero vector, $\omega_1$ is the shortest nonproportional  to $\omega_2$ vector. At the other hand,
 the intersection of any $\Gamma$-orbit with $\Phi^0$ contain no more that one point. Hence, one can use integration over 
$\overline{\Phi}$ as imitation of integration over $Y$.

There is a canonical compactification  of $Y$ by the $\Bbb H \cup \Bbb Q \Bbb P^1$ modulo $\Gamma$ based on the following remark. Denote the stabilizer
$$ \left\{\begin{pmatrix}
    \pm 1&\nu\\
    0&\pm 1 
\end{pmatrix}\right\}\subset \Gamma$$ of the point $\infty$ by $\Gamma^\infty$.
The space of $\Gamma^\infty$-orbits  $\Bbb H$ equals to punctured disc $0<q<1$ by the map $z\to q=\exp(2\pi i z)$. The (partial) compactification of the punctured disc by its center $q=0$ corresponds to the $\Gamma^\infty$-orbit of $\infty\in \Bbb Q \Bbb P^1$. This compact version of the modular curve is denoted by $X$.

\subsection{The Level Structure  and the Modular  Correspondences.}

Consider the following  subgroup of $\Gamma$:
$$\Gamma_0(n)=\left\{ g=\left(\begin{matrix}a&b\\c&d\end{matrix}\right)|c\equiv 0 \Mod{n}\right\}
\mbox{. Define}
\left(\begin{matrix}a&b\\nc'&d\end{matrix}\right)^{\diamond}=\left(\begin{matrix}d&- c'\\-nb&a\end{matrix}\right).$$
Put $W_n=\left(\begin{matrix}0&- 1\\n&0\end{matrix}\right)$. Then $g^{\diamond}=W_ngW_n^{-1}$, $\Gamma_0(n)=\Gamma\cap W_n\Gamma W_n^{-1}$.

The level structure modular curve $Y_0(n)$ is $\Bbb H$, equipped by the action of $\Gamma_0(n)$.
The canonical embedding $\Gamma_0(n)\to \Gamma$ corresponds to the map $\rho: Y_0(n)\to Y$. The map
$\Bbb H\to \Bbb H: z\to -\frac1{nz} =W_n(z)$ equipped by $\Gamma_0(n)\to \Gamma$, such that $g\to g^{\diamond}=W_ngW_n^{-1}$, corresponds to the map $\lambda: Y_0(n)\to Y$. 

The curve $Y_0(n)$ responds to {\em flags} of lattices $L_l\subset L_r\subset \Bbb C$ with cyclic quotient group  $L_r/L_l$ of order $n$ by corresponding $(\omega_1,\omega_2)$ the flag $\Bbb Z \omega_1\oplus \Bbb Z n\omega_2 \subset \Bbb Z \omega_1\oplus \Bbb Z \omega_2 $. The map $\lambda$ corresponds to the flag $L_l\subset L_r\subset \Bbb C$  its left component $L_l\subset  \Bbb C$ (with generators $(-\omega_2,n\omega_1)$); $\rho$ maps flag to $ L_r\subset \Bbb C$. 
 
The Fricke element $W_n$ centralize $\Gamma_0(n)$ and the map $\Bbb H\to \Bbb H: z\to W_n(z)$ equipped by $\Gamma_0(n)\to \Gamma_0(n):~ g\to W_ngW_n^{-1}$ determines a (Fricke) involution $W_n: Y_0(n)\to Y_0(n)$. 
The Fricke involution acts on flags as $(L_l\subset L_r)\to (n L_r\subset L_l)$.

It is possible to determine the compactification $X_0(n)$, let $X=X_0(1)$. 
 
The image of $\lambda\times \rho$ in $Y\times Y$ is known as modular or Hecke correspondence $T_n$.
The naive image $(-\frac1{nz},z)$ is not a curve in $Y\times Y$ as it is not $\Gamma\times \Gamma$-invariant, so we shall
pass to its $\Gamma\times \Gamma$-orbit, which has the following description. An integer matrix is called {\em primitive} if general common divisor of its entries equals $1$. Denote by ${\rm Mat}'_n$ the set of integer  primitive matrices of determinant $n$; for any matrix $M$ denote by   $\widetilde{T}_M\subset \Bbb H\times \Bbb H$ the curve $z_2=M(z _1)$. Then the Hecke correspondence is equal to the union of this curves over all primitive matrices  of the determinant $n$: $\bigcup_{M\in {\rm Mat}'_n} \widetilde{T}_M$. This is corollary of the following algebraic speculation. The group ${\rm SL}_2(\Bbb Z)$ acts by left or right multiplication on ${\rm Mat}'_n$: $L_g(M)=gM, R_g(M)=Mg^{-1}$. The resulting  left-right action of $\Gamma\times\Gamma$ on ${\rm Mat}'_n$ is transitive as the last set is an orbit of $W_n$, this is just the Smith normal form; the stabilizer of $M$ equals to the image of $\Gamma_0(n)$ under the map $g\to (g^{\diamond},g)$.

On the other hand, pushing down of $\widetilde{T}_M$ by  $\Gamma\times \Gamma$  is equivalent to action on $M$.\\

{\bf Remark.} The Hecke correspondence is symmetric. Indeed, for the lattice description this is  just  the following fact:
for 2-generated free abelian groups $L_1 \subset L_2$ if the quotient $L_2/L_1$ is cyclic of order $n$, then $nL_2\subset L_1$ and
the quotient  is cyclic of order $n$ too; for matrix description  this follows from the evident equation
     
$$gg^*=\begin{pmatrix}
a & b \\
c&  d
\end{pmatrix}\begin{pmatrix}
d & -b\\
-c & a
\end{pmatrix}=\begin{pmatrix}
ad-bc & 0\\
0& ad-bc
\end{pmatrix}=\rm{diag}(\rm{det}(g)),
     $$     
and  the second matrix is primitive iff the first is primitive, the determinants of these two matrices coincide.
     
\subsection{The CM-Points.}

Remind that if $\Bbb Z \ni \Delta<0$   and $\Delta\equiv 0,1 \Mod{4}$, then  ${\mathcal O}_\Delta =\omega_\Delta\Bbb Z\oplus \Bbb Z$,  $\omega_\Delta =\frac{\Delta+\sqrt{\Delta}}{2}$, is closed with respect to the multiplication and is known as the imaginary-quadratic order of the discriminant $\Delta$. Any discrete subring with unit of $\Bbb C$ is either the ring of integers $\Bbb Z$, or an imaginary-quadratic order.

Factorize  $\Delta=f^2\Delta_0$, where $\Delta_0$  is squarefree, then $\Delta_0$ is known as the fundamental discriminant and $f$ is known as the conductor. The fraction field $F_{\Delta_0}={\mathcal O}\otimes \Bbb Q$ depends on $\Delta_0$ only.

Any ideal ${\frak a}$ of ${\mathcal O}_\Delta$ is a lattice  ${\frak a}\subset\Bbb C$, so the class of ideals defines a  CM-point $P({\frak a})$  on $Y$ of the shape $\frac{-B+\sqrt{\Delta}}{2A}, B^2-4AC=\Delta$.

The modular correspondences intersect in $Y\times Y$. The point of intersection can be described as follows. 
Let ${\mathcal O}$ is an imaginary  quadratic order.
Let $\frak a$ and $\frak b$ are ideals of ${\mathcal O}$.  Consider the set ${\rm Isog}(F;{\frak a},{\frak b})$ of primitive elements of the lattice 
$ {\frak a}^{-1}{\frak b}=\{\mu\in F~|~\mu{\frak a}\subset{\frak b}\}$ 
modulo multiplication  by ${\mathcal O}r^*$. Evidently this set ${\rm Isog}(F;{\frak a},{\frak b})$ depends  on classes of ideals $[{\frak a}], [{\frak b}]$  only: 
$\mu \in{\rm Isog}(F;{\frak a},{\frak b})$ 
maps to 
$\beta\mu \alpha^{-1}\in{\rm Isog}(F;\alpha{\frak a},\beta{\frak b})$.
 Define the  index function ${\rm I}$ on set ${\rm Isog}(F;{\frak a},{\frak b})$ to $\Bbb N$  by the rule  
$${\rm I}(\mu)=[{\frak b}:\mu{\frak a}]={\rm Norm}(\mu)\frac{{\rm Norm}({\frak a})}{{\rm Norm}({\frak b})}.$$

For $\mu\in {\frak a}^{-1}{\frak b}$, $\mu{\frak a}\subset{\frak b}$ determines flag of lattices
$\mu{\frak a}\subset{\frak b}\subset \Bbb C$ of index ${\rm I}(\mu)$. Denote corresponding point on 
$Y_0({\rm I}(\mu))$ by $Q({\frak t}, \mu)$.

Any triple ${\frak t}= (F;[{\frak a}], [{\frak b}])$ corresponds to a point $P({\frak t})=(P({\frak a}), P({\frak b}))$ in $Y\times Y$ which belongs to $T_{{\rm I}(\mu)}$, where $\mu  \in{\rm Isog}({\mathcal O};[{\frak a}], [{\frak b}])$ determines a branch  of the curve 
$T_{{\rm I}(\mu)}$ passing through this point  $P({\frak t})$ under the isomorphism $Y_0({\rm I}(\mu))\to T_{{\rm I}(\mu)}$. The point $Q({\frak t},\mu )\in Y_0({\rm I}(\mu))\in T_{{\rm I}(\mu)}$ is the preimage of this intersection point  $P({\frak t})$. 

\newpage

 %%%%%%%%%%%%%%%%%%%%%%%%%%%%%%%%%%%%%%%%%%%%%%%%%%%%%

\section{The Arrangements}
\subsection{General Setup.}

 %%%%%%%%%%%%%%%%%%%%%%%%%%%%%%%%%%%%%%%%%%%%%%%%%%%%%

\begin{definition}
Modular or Hecke arrangements is a finite collection of the Hecke divisors in the square $X^2$ of the modular curve $X$.
\end{definition}

Rather standard  objects of research are holomorphic differential form on the complement to the arrangement with first order poles at the components of the arrangement. 

Let $\{C_a\}$ be a generic arrangement of smooth curves at compact smooth surface $S$. The cohomology groups of the complement $S\setminus\bigcup C_a$  can be calculated via spectral sequence, if this sequence degenerates in the term $E_2$, one has the following description of the second cohomology group.

It is filtrated by 3-step filtration, the minimal term can be represented by regular differential forms on $S$.

The second is enlarged by differential forms with logarithmic singularities at $C_a$, such that the residues are regular differential 1-forms on $C_a$.
This collection of supported on the curves residues is subject of rather natural restrictions and determines the initial differential form up to addition   regular on  the  surface form.

The third shape of differential forms contains log-forms such that the residues have only log-singularities only at poins of intersections of the curves. For residues of 2-forms this collection of 1-forms on the components is subject of the Parshin reciprocity law: for any point of intersection the sum of residues vanishes.

In the spirit of the theory of the mixed Hodge structures  we shall consider a pair of arrangements and correspond to them single-valued  Aomoto-type dilogarithms according to the following construction: consider a top degree form with a singularity at arrangement. For a pair we can integrate such a form correspondent to the first arrangement against the complex conjugate of a form correspondent to the second.

\subsection{Regular Functions and Differential Forms.}

A holomorphic function  on $X_0(n)$ corresponds to a modular form of the weight 0, so is constant. The holomorphic differential 1-forms on $X_0(n)$ correspond to  cusp forms of the weight 2. The space of such cusp forms is zero for $n=1$.  Hence by the Kunneth formula there are not neither holomorphic 1-forms nor holomorpic  2-forms on $X^2$. Consequently, a form of such type  on the complement of  an arrangement is uniquely determined by its residues at components of arrangement.  
 
The inverse  statement is also true: any collection of 1-forms on the components can be realized as residue of some 2-form on the complement. This follows from the explicit construction of such form.

 %%%%%%%%%%%%%%%%%%%%%%%%%%%%%%%%%%%%%%%%%%%%%%%%%%%%%

\subsection{The Modular Cauchy kernel.}
 
 %%%%%%%%%%%%%%%%%%%%%%%%%%%%%%%%%%%%%%%%%%%%%%%%%%%%%
        
In the paper \cite{Sa15} we have constructed the function $\Xi_0(n)(z,w)$ on $X_0(n)^2\setminus\mbox{diagonal}$. We denote by $(z, w)$  the coordinates on 
$\Bbb H \times \Bbb H,$ and let $p=\exp(2\pi i z)$, $q=\exp(2\pi i w)$. For the matrix $\gamma = \left(\begin{matrix} a&b\\ c&d\end{matrix}\right)$ let $j(\gamma, z) = (cz + d)$ be the automorphic factor. By definition, we put $$\mu_\gamma(z, w)= czw+dw-az-b=(cz+d)(w-\gamma z) =(1~ -z)\gamma^{\ast}\left(\begin{matrix} w\\ 1\end{matrix}\right).$$ We denote by $\mathrm{Ind}_0(n) = \left|\Gamma / \Gamma_0(n)\right|=n \prod_{p|n} \left( 1 + \frac{1}{p} \right)$ the index of $\Gamma_0(n)$  in a group $\Gamma$ and $\mathcal{A}(n)$ be the set of all representatives of right cosets $\Gamma_0(n) \backslash \Gamma$:   \begin{equation*} \Gamma  = \bigcup_{i=0}^{\mathrm{Ind}_0(n)} \Gamma_0(n) \alpha_i.  \label{repres_Gamma}\end{equation*}  

Following the lead of E. Hecke, we consider the series

\begin{definition} Let $s$ is a complex number, then
	\begin{equation}
	   \Xi_0(n)(z, w,s) = \frac{1}{2} \sum_{\gamma \in \Gamma_0(n)}      
            \frac{\overline{\mu_\gamma (z, w)}\overline{\mu_\gamma(z,
		\bar{w})}(\bar{w} - w)}{|\mu(z, w)|^{2s}|\mu_\gamma(z,                  \bar{w})|^{2s}}. \label{Xis}
	\end{equation}
\end{definition}

It was proven in \cite{Sa15} that this series can be analytically continued to the point $s = 1$, therefore we can put $\Xi_0(n)(z, w)=\lim_{s \rightarrow 1}\Xi_0(n)(z,w, s)$. For summation over the modular group let  $\Xi_0(n)(z, w) = \Xi(z, w)$. The function $\Xi_0(n)(z, w)$, considered as a function of the variable $z$, has a simple pole on the curve $T_n=\left\{(z, w) \in \mathbb{H} \times \mathbb{H}|~ w=\gamma z, ~\gamma \in \Gamma_0(n) \right\}$ and for all $w \in \mathbb{H}$ is equal to zero at all cusps of the group $\Gamma_0(n)$. The asymptotic of this nearly holomorphic modular form with respect to $z$ as $\Im z \to  \infty$ is $ 12/(\pi\mathrm{Ind}_0(n) (z-\bar{z}))+ O(p).$

Let $$E_{2, n}^{\infty}(z, s)=\frac{1}{2} \sum_{\gamma \in \Gamma_{\infty}\backslash \Gamma_0(n)}\frac{(c\bar{z}+d)^2}{|cz+d|^{2s+2}} $$ be the Eisenstein series with complex parameter $s$.
The Eisenstein series of weight~2 can be defined as the limit: $E_{2, n}^{\infty}(z)=\lim_{s\rightarrow 1} E_{2, n}^{\infty}(z, s)$. Since $ E_{2, 1}^{\infty}(z) = 1 - 3/(\pi y) + O(p)$ then
\begin{equation}
    \Xi_0^{\ast}(n)(z, w) dz = \left( \Xi_0(n)(z, w) - 2 \pi i  E_{2, n}^{\infty}(z)\right) dz \label{diff_form_Xi}
\end{equation} has a simple pole at the points $z = \gamma w$ and at the point $z=i\infty$ (with asymptotic $-dp/p +O(1)dp$).\\

We denote by ${\mathcal C}_0(n)(z, w)$ the ``Modular Cauchy kernel'':
\begin{equation} {\mathcal C}_0(n)(z, w)=~\Xi_0^{\ast}(n)(z,w) dz+~\Xi_0^{\ast}(n)(w,z) dw \in \Omega^{1,0}_{X_0(n)^2}\left(\mathrm{log}\mbox{(diagonal)}\right). \end{equation}

In the work  K. ~Bringmann and B. ~Kane \cite{BrK}, it was shown that the function $\Xi_0(n)(z, w)$ as a function of $w$ is a polar harmonic Maas form of weight 0, and as a function of $z$ is a polar harmonic form of Maas of weight 2. In paticular  K.~ Bringmann, B.~ Kane, S.~ Lobrich, K.~ Ono, L.~ Rolen in \cite{BKLOR17} proved that  $\Xi_0(n)(z, w)$ can be used to write an explicit formula for the action  of the Ramanujan's theta operator $\Theta(f)$.\\

If we assume that the genus of the group $\Gamma_0(n)$ is zero, and  $J_{\Gamma_0(n)}(z)=p^{-\frac{1}{n}}+O(p^{\frac{1}{n}})$ is the normalized generator of the ring of the modular functions with respect to $\Gamma_0(n)$, then \cite{Sa19}, \begin{equation} {\mathcal C}_0(n)(z, w) = \partial\log~ \left|J_{\Gamma_0(n)}(z)-J_{\Gamma_0(n)}(w) \right|^2 .  \label{cal C_0(n)}\end{equation}
%Suppouse that $\mathcal{T}_{(S)} = \left\{ T_m, T_n, T_l\right\}$.  

 %%%%%%%%%%%%%%%%%%%%%%%%%%%%%%%%%%%%%%%%%%%%%%%%%%%%%

\subsection{Bimodular differential forms whose residues are the cusps forms of weight 2.}

 %%%%%%%%%%%%%%%%%%%%%%%%%%%%%%%%%%%%%%%%%%%%%%%%%%%%%

Now we  will use the construction and properties of the function $\Xi_0(n)(z, w)$ to determine the logarithmic forms on the complement to the Hecke arrangement with given residues by the following consideration.
Let's consider two projections
\[ \begin{tikzcd}
& X_0(n) \arrow[rightarrow]{dr}{\rho}  \\
X \arrow[leftarrow]{ur}{\lambda} 
 && X
\end{tikzcd}\]
for $\lambda: z \to z $ and $\rho: z \to W_n(z) = -1/nz$ given by Fricke involution $W_n$.  We will explore  the map:
\[ \begin{tikzcd}
X_0(n) \times X_0(n), \arrow{d}{\lambda \times\rho} & \text{where} & \text{diagonal}~ \Delta \arrow{d} \\
X \times X  & & \text{Hecke curve}~ T_n
\end{tikzcd}\]

The direct image under this map transforms singularities at the diagonal to singularities at the Hecke curve, compare with the speculations concerning 
$\Xi_n(z, w) $ above.

First we determine the logarithmic forms $(2,0)$-forms with regular residue to the Hecke curve  on $X^{2} \backslash T_n$. There is a one-to-one correspondence between the $(1,0)$-forms on $X_0(n)$ and the elements of the group $H^0(X^{2},\Omega^{2}_{X^{2}}\langle \mathrm{log} \widetilde{T_n}\rangle)$, since as generators $H^0(X^{2},\Omega^{2}_{X^{2}}\left\langle \mathrm{log} \widetilde{T_n}\right\rangle)$ one can take forms whose residues are the cusp forms of weight 2 with respect to the group $\Gamma_0(n)$.
    
Let $f(z)$ be a cusp form of weight 2 and level $n$. Consider the following holomorphic 2-form on the $X_0(n) \times X_0(n)$:
\begin{equation} \widetilde{\mathrm{CoRes}^1}(f) = {\mathcal C}_0(n)(z, w) \wedge (f(z) dz)\end{equation}

We can determine the holomorphic $(1,0)$-form on $X \times X$ with the poles along the Hecke curve $T_n$ as an image under the map $(\lambda\times \rho)_{\ast}$: let \begin{equation}
    \mathrm{CoRes}^1(f) =(\lambda\times \rho)_{\ast}\widetilde{\mathrm{CoRes}^1}(f). 
\end{equation}
Explicitly     
$$\mathrm{CoRes}^1(f) =\sum_{\alpha_i \in \mathcal{A}(n)} \sum_{\beta_j \in \mathcal{A}(n)}  \Xi_0^{\ast}(n) ( W_n \beta_j w, \alpha_i z) f(\alpha_i z) d W_n \beta_j w \wedge d\alpha_i z .$$
The correctness of the definition is verified
by Lemma \ref{lm_2} in the Appendix. It easy to check that this form is closed and regular at the cusp (since the residue is a cusp form of weight 2 with respect to the full modular group and, accordingly, equals 0).

\subsection{Differential 2-forms on the complement of the ``triangle'' on the $X\times X$.}

Denote by $\mathfrak{D}$ the ``triangle'' on the product of two modular curves $X \times X$, formed as a result of the intersection of three Hecke curves $T_n$, $T_m$, $T_l$. 

\begin{center}
    \begin{tikzpicture}[scale = 1,line width=1pt,
        %thick,
        >=stealth',
        dot/.style = {
          draw,
          fill = white,
          circle
        }
      ]
      \coordinate (O) at (0,0);
      
        \draw[->] (-0.3,0) -- (5.5,0) coordinate[label = {below:}] (xmax);
        \draw[->] (0,-0.3) -- (0,5) coordinate[label = {right:}] (ymax);
          
        \draw[black] plot[smooth] coordinates {(0.6,1.3) (1,2) (2,3.3) (3,3.9) (4,4.2) } node [right] {$T_m$} ;
        \draw[black] plot[smooth] coordinates {(0.6, 2.5) (1,2) (2,1.2) (3,0.8) (4,0.75) (4.7,1)} node [right] {$T_n$} ;
        \draw[black] plot[smooth] coordinates {(2.8,4.5)(3,3.9) (3.7,2.7) (4,0.75) (4.5,0.4)} node [right] {$T_l$} ;    
        
        \filldraw[black] (4,0.75) circle (2pt) node[anchor=north east]{$p_{ln}$};
        \filldraw[black] (1,2) circle (2pt) node[anchor=east]{$p_{mn}$};
        \filldraw[black] (3,3.9) circle (2pt) node[anchor= south east]{$p_{ml}$};    
    
        \draw[gray, densely dotted] (1,2) -- (4.7,2);
        \draw[gray, densely dotted] (0.8,0.75) -- (4.7,0.75);
        \draw[gray, densely dotted] (0.8,3.9) -- (4.7,3.9);
        
        \draw[gray, dashdotted] (1,0.4) -- (1,4.5);
        \draw[gray, dashdotted] (4,0.4) -- (4,4.5);
        \draw[gray, dashdotted] (3,0.4) -- (3,4.5);    
        
     %   \node[] at (8,2.5) {arrangement $\mathcal{T}_{(S)}$ on $X \times X$};
        \node[] at (2.3,2.75) {\large$\mathfrak{D}_m$};         
    \end{tikzpicture}   
    \quad\quad\quad
\begin{tikzpicture}[scale = 1,line width=1pt,
        %thick,
        >=stealth',
        dot/.style = {
          draw,
          fill = white,
          circle
        }
      ]
  \coordinate (O) at (0,0);
  
  \draw[->] (-0.3,0) -- (5.5,0) coordinate[label = {below:}] (xmax);
  \draw[->] (0,-0.3) -- (0,5) coordinate[label = {right:}] (ymax);  

    \draw[black] plot[smooth] coordinates {(-0.3,-0.3) (4.5,4.5)} node [right] {$\Delta$} ;
    
    \filldraw[draw=lightgray, fill=lightgray!20, densely dotted] (1.5,1.5) -- (3.5,3.5) -- (3.5,1.5) -- cycle;
    
    \filldraw[black] (1.5,1.5) circle (2pt) node[anchor=north west]{$\widetilde{p_{mn}}$};
    \filldraw[black] (3.5,3.5) circle (2pt) node[anchor=north west]{$\widetilde{p_{nl}}$};

    \draw[black, densely dotted] (0,1.5) -- (5,1.5) node[pos=0, left] {$a$};
    \draw[black, densely dotted] (0,3.5) -- (5,3.5)node[pos=0, left] {$b$};

  \draw[black, dashdotted] (1.5,0) -- (1.5,5);%node[pos=0, below] {$a$};
 \draw[black, dashdotted] (3.5,0) -- (3.5,5); %node[pos=0, below] {$b$};
     
    \node[] at (3,2.25) {\large$\widetilde{\mathfrak{D}}_n$};
  %  \node[] at (6,2) {$X_0(n) \times X_0(n)$};  
  
\end{tikzpicture}
 
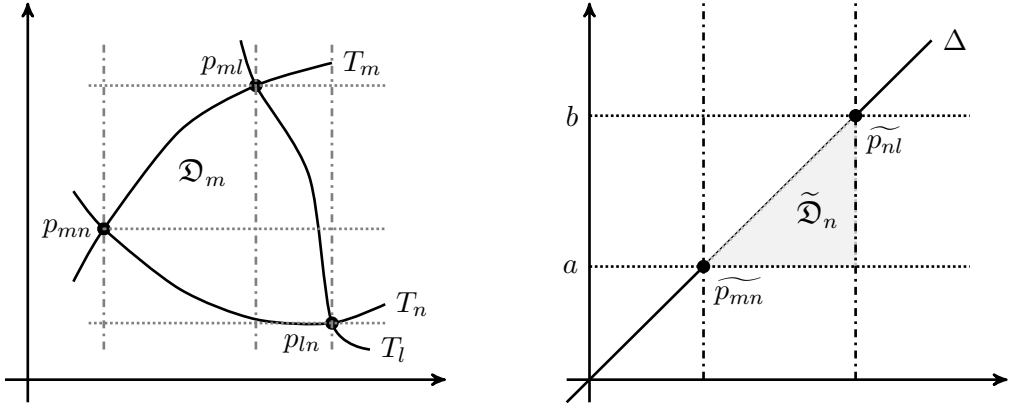
\captionof{figure}{Left: Hecke arrangement on $X \times X$. Right: $X_0(n) \times X_0(n)$. The diagonal $\Delta$ on $X_0(n) \times X_0(n)$ maps to the Hecke curve $T_n$ under the mapping $\lambda \times \rho$.}
\end{center}

On each of the curves $T_m, T_m, T_l$ there exists a form with poles at two intersection points ($p_{mn}, p_{ml}$ or $p_{nl}$).

\begin{definition}  \label{Cores_def_2} Let $\widetilde{p_{nl}}=(a,a),$ $ 
 \widetilde{p_{mn}}=(b,b)$ be a preimages of CM points $p_{nl}, p_{mn}$ on $\Delta \subset X_0(n) \times X_0(n)$. We define the 2-form with poles on the diagonal and at the points $\widetilde{p_{nl}},$ $ \widetilde{p_{mn}}$:
\begin{equation*}
\widetilde{\mathrm{CoRes}^2}(n, a, b) = {\mathcal C}_0(n)(z, w) \wedge \left( {\mathcal C}_0(n)(z, a) - {\mathcal C}_0(n)(z, b) \right). \label{Cores_def_1}
\end{equation*}
Let \begin{equation}
    \mathrm{CoRes}^2(n, a, b) =(\lambda\times \rho)_{\ast}\widetilde{\mathrm{CoRes}^2}(n, a, b).
\end{equation}
\end{definition}

By the definition, the action of the Fricke involution on the modular forms of weight 2 for the subgroup $\Gamma_0(n)$ is described by the formula: $f(z) \circ \left[W_n\right] = 1/(nz^2)f(W_n z)$, so $W_n f(z) = n f(z) \circ \left[W_n\right]$.% \footnote{S. Lang, Introduction to Modular Forms, p.121}. 
Therefore, rewriting the desired form $\mathrm{CoRes}^2(n, a, b)$ on the product of two modular curves in terms of the $\Xi$-functions we have
\begin{multline}
\mathrm{CoRes}^2(n, a, b) =\\= n \sum_{\alpha_i \in \mathcal{A}(n)}\sum_{\beta_j \in \mathcal{A}(n)}  \Xi^{\ast}_0(n) ( W_n \beta_j w,  \alpha_i z) d  W_n \beta_j w \wedge \Xi^{\ast}_0(n) (\alpha_i z,   W_n a) d \alpha_i z  -\\
     -  \Xi^{\ast}_0(n) (W_n \beta_j w,  \alpha_i  z) d W_n \beta_j w \wedge \Xi^{\ast}_0(n) (\alpha_i z,  W_n b) d \alpha_iz. 
\end{multline}
There are several supporting statements about the poles and residues of this form:
\begin{lemma} The form $\mathrm{CoRes}^2(n, a, b)$ has the following singularities:
the simple pole at $w=0$ $($with the asymptotic $\left(-dq/q +O(1) dq \right) \wedge \left( \Xi^{\ast}(z, a)d z-\Xi^{\ast}(z, b)dz \right))$,
the simple pole at the image of the points $z=a, b$ with the residue $\Xi^{\ast}(w, a)dw$ and at the Hecke curve $T_n$ with the residue $\Xi^{\ast}(w, a)dw - \Xi^{\ast}(w, b)dw.$
\end{lemma}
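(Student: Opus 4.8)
The plan is to establish the pole structure upstairs on $X_0(n)^2$, where $\widetilde{\mathrm{CoRes}^2}(n,a,b)$ has a transparent form, and then transport each statement through the finite map $\lambda\times\rho$. First I would simplify the wedge. Since $a,b$ are constants we have $da=db=0$, so $\mathcal{C}_0(n)(z,a)=\Xi_0^{\ast}(n)(z,a)\,dz$ and $\mathcal{C}_0(n)(z,b)=\Xi_0^{\ast}(n)(z,b)\,dz$ are proportional to $dz$; because $dz\wedge dz=0$, only the $\Xi_0^{\ast}(n)(w,z)\,dw$ summand of $\mathcal{C}_0(n)(z,w)$ survives the wedge, giving
\[
\widetilde{\mathrm{CoRes}^2}(n,a,b)=\Xi_0^{\ast}(n)(w,z)\,\bigl[\Xi_0^{\ast}(n)(z,a)-\Xi_0^{\ast}(n)(z,b)\bigr]\,dw\wedge dz .
\]
The problem is thereby reduced to reading off the singularities of a product of three $\Xi_0^{\ast}(n)$-factors in the two independent variables.

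Next I would assemble the singularity data of each factor from \cite{Sa15}, as recorded in \eqref{diff_form_Xi}: the factor $\Xi_0^{\ast}(n)(w,z)\,dw$ has a simple pole along the diagonal $w=\gamma z$ (with transverse residue $1$ by the Cauchy-kernel normalisation) and a simple pole at the cusp $w=i\infty$ with leading term $-dq/q$; each factor $\Xi_0^{\ast}(n)(z,\bullet)\,dz$ has simple poles at $z=\gamma\,\bullet$ and at the cusp $z=i\infty$ with leading term $-dp/p$. The decisive observation is that the $z$-cusp pole is produced by the Eisenstein correction $-2\pi i\,E_{2,n}^{\infty}(z)$, which does not depend on the second argument; hence in the difference $\Xi_0^{\ast}(n)(z,a)-\Xi_0^{\ast}(n)(z,b)$ the two $-dp/p$ contributions cancel and no pole at $z=i\infty$ remains. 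This is exactly why the combination $\mathcal{C}_0(n)(z,a)-\mathcal{C}_0(n)(z,b)$ is used, and it accounts for the absence of a $z$-cusp term in the statement.

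I would then compute the Poincaré residues divisor by divisor. Along $w=z$ the bracket is regular, so writing the transverse coordinate $u=w-z$ and using $dw\wedge dz=du\wedge dz$ yields the residue $\bigl[\Xi_0^{\ast}(n)(z,a)-\Xi_0^{\ast}(n)(z,b)\bigr]\,dz$ restricted to the diagonal. Along $z=a$ (resp.\ $z=b$) only $\Xi_0^{\ast}(n)(z,a)$ (resp.\ $\Xi_0^{\ast}(n)(z,b)$) is singular, with residue $1$, so the Poincaré residue is $\pm\,\Xi_0^{\ast}(n)(w,a)\,dw$ (resp.\ $\mp\,\Xi_0^{\ast}(n)(w,b)\,dw$). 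Finally, near $w=i\infty$ the $z$-factors are regular, so the leading behaviour is $(-dq/q+O(1)dq)\wedge\bigl[\Xi_0^{\ast}(n)(z,a)-\Xi_0^{\ast}(n)(z,b)\bigr]\,dz$.

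The last step is to carry these conclusions through $(\lambda\times\rho)_{\ast}$, using the explicit coset expansion displayed before the lemma and the well-definedness established in Lemma \ref{lm_2}. The map $\lambda\times\rho$ sends the diagonal of $X_0(n)^2$ to the Hecke curve $T_n$, so the diagonal residue becomes the residue along $T_n$; summing $\Xi_0^{\ast}(n)$ over the $\mathrm{Ind}_0(n)$ cosets of $\Gamma_0(n)$ turns the level-$n$ kernel into the level-one kernel $\Xi^{\ast}$, producing $\Xi^{\ast}(w,a)\,dw-\Xi^{\ast}(w,b)\,dw$. The Fricke component $\rho\colon w\mapsto W_n(w)$ carries the cusp $w=i\infty$ to $w=0$, converting the $w$-cusp pole into the asserted pole at $w=0$ with asymptotic $(-dq/q+O(1)dq)\wedge(\Xi^{\ast}(z,a)\,dz-\Xi^{\ast}(z,b)\,dz)$, while the divisors $z=a,b$ map to the asserted vertical divisors with residues $\Xi^{\ast}(w,a)\,dw$ (and the analogous $b$-term). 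I expect the transport to be the main obstacle: one must verify that the pushforward commutes with taking residues (trace of residues over the fibre), that the coset sum reproduces the level-one objects with the correct normalisation, and that the local uniformiser $q$ at the cusp $0$ is handled correctly after the Fricke involution — the delicate but essentially routine point where \cite{Sa15} and Lemma \ref{lm_2} enter.
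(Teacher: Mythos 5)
Your proposal is correct and follows essentially the same route as the paper: read off the Poincar\'e residues of the three $\Xi_0^{\ast}(n)$-factors and then descend to level one through the coset sum, with the diagonal of $X_0(n)^2$ mapping to $T_n$ under $\lambda\times\rho$. The only substantive remark is that the steps you defer as ``delicate but essentially routine transport'' are precisely what the paper's proof consists of: the Eisenstein descent $\sum_{\beta}E_{2,n}^{\infty}(W_n\beta w)\,dW_n\beta w=n^{-1}E_{2,1}^{\infty}(w)\,dw$ for the cusp asymptotic, and the cocycle identity $\mu_{W_n^{\ast}hgW_n\beta_j}(w,a)=n\,\mu_{\gamma}(w,a)$, which absorbs the prefactor $n$ and converts the level-$n$ kernel into $\Xi^{\ast}$ in the residues at $z=a,b$ and along $T_n$. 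Your observation that the $z$-cusp poles cancel in the difference $\Xi_0^{\ast}(n)(z,a)-\Xi_0^{\ast}(n)(z,b)$ is correct but is not part of the lemma itself; it underlies only the holomorphy claim in the remark that follows it.
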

\begin{proof}
\begin{enumerate}
\item It was proven in \cite{Sa19} that for $\Im z \to \infty$, $\Xi_0(n)(w, z)$ can be represented by the power series in terms of the Poincare series of weight 2: 
\vspace{-0.3cm}
    \begin{equation*}
	\Xi_0(n)( w, z)=2 \pi i  \lim_{s\rightarrow 1} \left( E_{2, n}^{\infty}(w, s) + \sum_{r> 0}    P^{\infty}_{n, r}( w, s)~p^{r} + \sum_{r< 0}P^{\infty}_{n, r}( w,  s)~\bar{p}^{r} \right), 
    \end{equation*}
\vspace{-0.5cm}
where $$P^{\infty}_{n, k}(z, s)=\frac{1}{2}\sum_{\gamma \in \Gamma_{\infty} \backslash \Gamma_0(N)}\frac{e^{-2 \pi i k \gamma z_1}}{(cz+d)^2|cz+d|^{2s-2}}.$$
So, the constant term  of the function $\Xi^{\ast}_0(n)( W_n \beta w,  z) d  W_n \beta w$ coincides with the constant term of the form $2 \pi i  \lim_{s\rightarrow 1} E_n^{\infty}( W_n \beta w, s) d  W_n \beta w$. Further, since $W_n \left(\begin{matrix} a & b \\ c& d \end{matrix}\right)   = \left(\begin{matrix} -c & -d \\ an& bn \end{matrix}\right)$, then 
$\sum_{\beta \in \mathcal{A}(n)} E_{2, n}^{\infty}( W_n \beta w) d  W_n \beta w = n^{-1} E_{2, 1}^{\infty}(w) d w$ is the Eisenstein series of weight 2 with respect to the full modular group. This completes the proof of the first statement.

\item By the definition, the residue at the points $\alpha_i z = W_n a$ is the form: 
$$ \sum_{\alpha_i \in \mathcal{A}(n)} \sum_{\beta_j \in \mathcal{A}(n)}  \Xi^{\ast}_0(n) ( W_n \beta_j w, h^{\ast} W_n \beta_j  a) d W_n \beta_j w,  ~~~ h \in \Gamma_0(n). $$
Further, for $h, g \in \Gamma_0(n)$ we have
$$ \mu_{g}( W_n \beta_j w, h^{\ast} W_n a) = \mu_{W_n^{\ast}  h g W_n \beta_j}(w,  a)(j(W_n \beta_j,w))^{-1}(j(h W_n ,a))^{-1}. $$
Note that
$W_n^{\ast}  h g W_n \beta_j =  W_n^{\ast}  W_n  g' \beta_j =  W_n^{\ast}  W_n\gamma = n \gamma, $ where
 $\gamma$ is a some representative of the coset (for $\beta_j$). Therefore,
$\mu_{ W_n^{\ast}  h g W_n \beta_j}(w, a)  =   n \mu_{\gamma}(w, a). $ So, after summation over $\beta_j \in \mathcal{A}(n)$, we obtain $$\mathrm{Res}_{\alpha_i z = W_n a}\mathrm{CoRes}^2(n, a, b) =  \Xi^{\ast}(w, a)dw.$$

\item Finally, we find the residue of $\mathrm{CoRes}^2(n, a, b)$ at the Hecke curve $T_n$: 
\begin{multline*}
    \mathrm{Res}_{T_n}\mathrm{CoRes}^2(n, a, b) = n \sum_{\beta_j \in \mathcal{A}(n)}  \left( \Xi^{\ast}_0(n) (W_n \beta_j w,   W_n a) d W_n \beta_j w  \right. -\\- \left.  \Xi^{\ast}_0(n) (W_n \beta_j w,  W_n b) d W_n \beta_j w \right) = \Xi^{\ast}(w, a)dw - \Xi^{\ast}(w, b)dw.
\end{multline*}
\end{enumerate}
\end{proof}
\begin{definition} \label{Def_Cores_2}  The closed differential 2-form on the complement to the ``triangle'' $\mathfrak{D}$ formed by the arrangement of the Hecke curves, i.e. ``sides'' $T_m, T_n, T_l$, is
\begin{equation} 
 \mathrm{CoRes}^2(\mathfrak{D})  =\mathrm{CoRes}^2(n, a, b) - \mathrm{CoRes}^2(m, a, c) +\mathrm{CoRes}^2(l, b, c), 
 \end{equation} where $a, b, c$ is the coordinates of the corresponding intersection points (i.e. the vertices of the ``triangle'').
\end{definition}

It obviously follows from the previous lemma that this form is holomorphic in the cusps and in the images of the points $z=a, b$. At the same time, this form has simple poles on the Hecke curves with residues $\mathrm{Res}_{T_n}\mathrm{CoRes}^2 = \Xi^{\ast}(w, a)dw - \Xi^{\ast}(w, b)dw$. The $\bar{\partial}$-closedness of this form follows naturally from the following assertion \cite{Sa19}: 
\begin{equation*}
		\lim_{s \rightarrow 1} \frac{d}{d\bar{z}}~\Xi(w,z,s)=-\frac{1}{2}~\sum_{\gamma \in \Gamma}\frac{1}{\mu_{\gamma}(w,- \bar{z})^2}=-\omega(w, \bar{z}), \label{eq: Dz_2Xi}
	\end{equation*}
where the function $\omega(w,\bar{z})$ was introduced by Don Zagier in \cite{Za75}. It is easy to show that the function $\omega(w,\bar{z})$ is a cusp form with respect to $w$ of weight 2, and therefore is equal to 0.

This construction can be generalized to any cyclic chain of the Hecke curves (``polygon'').

\section{The Hauptmodule Modular Aomoto Dilogarithm} 
\subsection{The Regulator Formulas.}
Let $X$ be a smooth complex variety. Following to Goncharov \cite{Go02}, correspond to a collection of holomorphic non-vanishing functions $\{f_i\}$  the differential forms:
\begin{equation}r_2(f_1,f_2)=\frac12\left(\log|f_1|^2( \partial-\overline{\partial})\log|f_2|^2
-\log|f_2|^2( \partial-\overline{\partial})\log|f_1|^2\right),\end{equation}
%\vspace{-0.8cm}
\begin{multline} \quad\quad r_3(f_1,f_2,f_3)=\\=\frac16\sum_{\sigma\in {\frak S}_3}{\rm sign}(\sigma) \phi_{\sigma(1)} \left(\partial\phi_{\sigma(2)}\wedge  \partial\phi_{\sigma(3)}-\partial\phi_{\sigma(2)}\wedge \overline{\partial}\phi_{\sigma(3)}+
\overline{\partial}\phi_{\sigma(2)}\wedge\overline{\partial}\phi_{\sigma(3)}\right),\end{multline}
where $\phi_j=\log|f_j|^2.$ Then
$$d D_2(f)=r_2(f,1-f),$$ 
%\vspace{-0.5cm}
$$d r_2(f_1,f_2)={\partial}\log|f_1|^2\wedge{\partial}\log|f_2|^2-
\overline{\partial}\log|f_1|^2\wedge\overline{\partial}\log|f_2|^2,$$
%\vspace{-0.5cm}
%$$-2\pi i\sum_D {\rm ord}_D(f_{1})\log|f_2|^2\delta_D 
%+2\pi i\sum_D {\rm ord}_D(f_{2})\log|f_1|^2\delta_D;
$$dr_3(f_1,f_2,f_3)={\partial}\log|f_1|^2\wedge{\partial}\log|f_2|^2\wedge{\partial}\log|f_3|^2+
\overline{\partial}\log|f_1|^2\wedge\overline{\partial}\log|f_2|^2\wedge\overline{\partial}\log|f_3|^2.
$$

\subsection{Compact Curves.}

For a compact curve $C$ and meromorphic functions $f$ and
$g$ the form $r_2(f,g)$ is a current and
$$d r_2(f,g)=
-2\pi i\sum_{p\in C} \log|{\rm ts}_p(f,g)|^2\delta_p,
$$
where ${\rm ts}_p(f_1,f_2)$ denotes the tame symbol 
$$\left.\frac{f^{{\rm ord}_p(g)}}{g^{{\rm ord}_p(f)}}\right|_p.$$

 For  meromorphic functions $f_1, f_2, f_3$,  such that the divisor of $f_3$ does not intersect divisors of  $f_1$ and $f_2$, the forms $r_2(f_1,f_2)\wedge \overline{\partial}\log|f_3|^2$ and $r_3(f_1,f_2,f_3)$ are currents and 
 \begin{multline} r_2(f_1,f_2)\wedge \overline{\partial}\log|f_3|^2+r_3(f_1,f_2,f_3)=\\= d\left(\frac13(\phi_1\phi_3\partial \phi_2-\phi_2\phi_3\partial \phi_1)+\frac16 (\phi_2\phi_3\overline{\partial}\phi_1-\phi_1\phi_3\overline{\partial}\phi_2)\right)-\\
-\frac{2\pi i}{2} \sum_{p\in C}\left(\mathrm{ ord}_p(f_2)(\phi_1(p)\phi_3(p))-\mathrm{ ord}_p(f_1)(\phi_2(p)\phi_3(p))\right)\delta_p,
\end{multline}
here, as above 
$\phi_i=\log|f_i|^2
$. 
\begin{multline} \int_{\Bbb P^1}r_3(x,1-x,x-t)
=\\=-\int_{\Bbb P^1}r_2(x,1-x)\wedge \overline{\partial}\log|x-
t|^2=-\int_{\Bbb P^1}dD_2(x)\wedge \overline{\partial}\log|x-t|^2=\\=-\int_{\Bbb P^1}d\left(D_2(x)\ \overline{\partial}\log|x-t|^2\right)+\int_{\Bbb P^1}D_2(x)
d\overline{\partial}\log|x-t|^2=-2\pi i D_2(t). \end{multline}

 \textbf{Fact (Rudenko)} \cite{Ru15} \label{Fact_R}. For any curve $C$ and triple of meromorphic functions $\{f,g,h\}$ the integral $\frac1{2\pi i}\int_{C}r_3(f,g,h)$ is an algebraic sum of the values of $D_2$:
 
\begin{equation}\sum_jm_j 2\pi i D_2(\eta_j),~~~ \sum_jm_j\eta_j={\mathcal H}(f,g,h),~~~\text{such that}\end{equation}\begin{gather*}
\sum_jm_j\eta_j\wedge(1-\eta_j)=\sum_{p\in C}
\left(\mathrm{ord}_p(f)(g(p)\wedge h(p))- \mathrm{ord}_p(g)(f(p)\right.\wedge h(p)) + \\ \left. +~ \mathrm{ord}_p(h)(f(p)\wedge g(p))\right)\in \bigwedge^2(\Bbb C^*).
\end{gather*}

So,
\begin{multline}  \int_Cr_2(f_1,f_2)\wedge \overline{\partial}\log|f_3|^2=-\int_Cr_3(f_1,f_2,f_3) +\\
\int_C d\left(\frac13(\phi_1\phi_3\partial \phi_2-\phi_2\phi_3\partial \phi_1)+\frac16 (\phi_2\phi_3\overline{\partial}\phi_1-\phi_1\phi_3\overline{\partial}\phi_2)\right)-\\
-\frac{2\pi i}{2} \sum_{p\in C}\left(\mathrm{ord}_p(f_2)(\phi_1(p)\phi_3(p)-\mathrm{ord}_p(f_1)(\phi_2(p)\phi_3(p)\right).
\end{multline}

\subsection{The Square of the Modular Curve. Modular Aomoto Dilogarithm.} 

We call a pair of arrangements $\frak{D}$ and $\frak{D}'$ {\em admissible } if their elements $T_i$ are mutually different and intersection points of elements of each arrangement are mutually different also. In this case the integral 
$$\int_{Y^2}{\mathrm{CoRes}^2}(\frak{D}')\wedge \overline{\mathrm{CoRes}^2(\frak{D})}$$ converges and is known as the Aomoto dilogarithm.

We shall restrict ourself by the case when all modular correspondence are curves of genus zero. In this case the filtration on the cohomology group of the complement reduces to the  forms last shape, there aren't neither regular forms nor forms with regular residues. We call such modular arrangements {\em Hautmodule} arrangements.

\begin{theorem} For admissible pair of the Hauptmodule modular arrangements $\frak{D}$ and $\frak{D}'$ the Aomoto dilogarithm
$$\int_{Y^2}{\mathrm{CoRes}^2}(\frak{D}')\wedge \overline{\mathrm{CoRes}^2(\frak{D})}$$
equals to the combination with rational coefficients of the values of the Bloch-Wigner dilogarithm $(2\pi i)^2D_2$ at algebraic number.
\end{theorem}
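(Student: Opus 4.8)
\section*{Proof proposal}

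The plan is to reduce the two–dimensional integral over $Y^2$ to a sum of one–dimensional integrals over the Hecke curves and then to apply the Rudenko reciprocity Fact on each of them. First I would invoke the Hauptmodule hypothesis together with \eqref{cal C_0(n)} to write every Modular Cauchy kernel as ${\mathcal C}_0(n)(z,w)=\partial\log|J_{\Gamma_0(n)}(z)-J_{\Gamma_0(n)}(w)|^2=d\log\left(J_{\Gamma_0(n)}(z)-J_{\Gamma_0(n)}(w)\right)$, so that both $\mathrm{CoRes}^2(\mathfrak{D}')$ and $\mathrm{CoRes}^2(\mathfrak{D})$ become sums of wedges of two forms $\partial\log|\,\cdot\,|^2$ of \emph{rational} functions on the genus–zero curves. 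Writing $\Phi=\mathrm{CoRes}^2(\mathfrak{D}')$ (a $(2,0)$–form) and $\Psi=\mathrm{CoRes}^2(\mathfrak{D})$, the integrand is the top form $\Phi\wedge\bar\Psi$. Since $\bar\Psi$ is a product $\bar\partial A'\wedge\bar\partial B'$ with $A'=\log|\,\cdot\,|^2$ and $B'=\log|\,\cdot\,|^2$, it is $\bar\partial$–exact, $\bar\Psi=\bar\partial\eta$ with $\eta=A'\,\bar\partial B'$. Stokes on the compactification $X^2$, where the $\mathrm{CoRes}$–forms are regular at the cusps, kills the total–derivative term and leaves $\int_{Y^2}\Phi\wedge\bar\Psi=-\int_{Y^2}\bar\partial\Phi\wedge\eta$. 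Because $\Phi$ is holomorphic with first–order poles along the Hecke curves $T'_i$ of $\mathfrak{D}'$, the current $\bar\partial\Phi$ is of Poincaré--Lelong type, equal to $2\pi i\sum_i\mathrm{Res}_{T'_i}(\Phi)\,\delta_{T'_i}$, so the integral collapses to a sum of curve integrals and produces the first factor of $2\pi i$.

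On each $T'_i\cong X_0(n_i)\cong\mathbb P^1$ the residue is, by the Lemma on the singularities of $\mathrm{CoRes}^2(n,a,b)$, the $d\log$ of a rational function ($\Xi^{\ast}(w,a)dw-\Xi^{\ast}(w,b)dw$ in Hauptmodule form), while $\eta|_{T'_i}$ is $\phi_{G_1}\,\bar\partial\phi_{G_2}$ with $\phi_{G}=\log|G|^2$. A short rearrangement identifies the integrand, up to a $\partial$–exact term and an explicit Poincaré--Lelong residue, with the $(1,1)$–part of $r_2(f_1,f_2)\wedge\bar\partial\log|f_3|^2$ for three rational functions $f_1,f_2,f_3$ that are restrictions of differences of hauptmoduls. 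The regulator identity for compact curves rewrites $\int_{T'_i}r_2(f_1,f_2)\wedge\bar\partial\log|f_3|^2$ as $-\int_{T'_i}r_3(f_1,f_2,f_3)$ plus boundary and tame–symbol terms; the boundary term vanishes on $\mathbb P^1$, and the Rudenko Fact converts $\frac{1}{2\pi i}\int_{T'_i}r_3(f_1,f_2,f_3)$ into $\sum_j m_j\,2\pi i\,D_2(\eta_j)$ with $\sum_j m_j\eta_j=\mathcal H(f_1,f_2,f_3)$. This supplies the second $2\pi i$ and explains the overall factor $(2\pi i)^2$.

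The terms that are \emph{not} of dilogarithm type are the tame–symbol and Poincaré--Lelong contributions $\sum_p\mathrm{ord}_p(\,\cdot\,)\,\log|\,\cdot\,(p)|^2\log|\,\cdot\,(p)|^2$ at the marked points. These are products of logarithms of algebraic numbers, of lower weight, and cannot be absorbed into $D_2$; I would show they cancel. The cancellation is exactly the content of the closed ``triangle'' (more generally cyclic ``polygon'') structure of $\mathrm{CoRes}^2(\mathfrak D)$ in Definition~\ref{Def_Cores_2}: the residues of the three sides $T_m,T_n,T_l$ at each common vertex obey the Parshin reciprocity law quoted in the introduction, namely that the sum of residues at every intersection point vanishes. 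Summing the three curve contributions and using this reciprocity at the vertices $a,b,c$ makes the product–of–logarithm terms telescope to zero, leaving only the dilogarithm part.

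It remains to see that the arguments $\eta_j$ are algebraic and the coefficients rational. The zeros and poles of the $f_i$ on each $T'_i$ occur only at cusps and at the triangle vertices, which are CM points, being the intersection points of Hecke curves described by the triples $\mathfrak t=(F;[\mathfrak a],[\mathfrak b])$; by the theory of complex multiplication the values of $J_{\Gamma_0(n)}$ at CM points are singular moduli, hence algebraic numbers. Therefore $\mathcal H(f_1,f_2,f_3)$ lies in $\bigwedge^2(\overline{\mathbb Q}^{\,*})$ and the $\eta_j$ solving $\sum_j m_j\,\eta_j\wedge(1-\eta_j)=\mathcal H(f_1,f_2,f_3)$ are algebraic, while the $m_j$, read off from integer orders of vanishing and the rational normalizations ($n$, $\mathrm{Ind}_0(n)$, and the numerical constants in $r_2,r_3$), are rational. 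The main obstacle I expect is the cancellation of the lower–weight terms: verifying rigorously that \emph{all} product–of–logarithm contributions vanish requires matching the tame symbols along each side with the Parshin reciprocity relations at the shared vertices, together with correct bookkeeping of the Fricke pushforward $(\lambda\times\rho)_{\ast}$ and of the coset sums appearing in the explicit $\Xi$–expansion of $\mathrm{CoRes}^2$.
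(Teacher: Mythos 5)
Your overall strategy coincides with the paper's: replace each Modular Cauchy kernel by $\partial\log|J_{\Gamma_0(n)}(z)-J_{\Gamma_0(n)}(w)|^2$ using the Hauptmodule hypothesis, reduce the integral over $Y^2$ to one-dimensional integrals over curves by a Stokes/Poincar\'e--Lelong argument, identify those integrals with $\int_C r_2(f_1,f_2)\wedge\overline{\partial}\log|f_3|^2$, and finish with the regulator identity and Rudenko's Fact. The one essential device you are missing is the fiber product ${\mathcal Y}^{(2)}(n',n)$ of the two coverings $\rho(n')\times\lambda(n')$ and $\lambda(n)\times\rho(n)$. Both $\mathrm{CoRes}^2$ forms are \emph{pushforwards}, so on $Y^2$ neither is literally a wedge of $\partial\log$'s of single-valued rational functions: each coset summand involves $J_{\Gamma_0(n)}(\alpha_i z)$, which lives only on the cover. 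The paper pulls both forms back to the common fiber product, where the four factors become genuine meromorphic functions $F,f,G,g$, writes the integrand as $\pi_*\left(\partial\log|G|^2\wedge\partial\log|g|^2\wedge\overline{\partial}\log|F|^2\wedge\overline{\partial}\log|f|^2\right)$, and applies the exact-form identity for $d\left(r_2(G,g)\wedge\overline{\partial}\log|F|^2\wedge\overline{\partial}\log|f|^2\right)$ there; the residual one-dimensional integrals then live on honest curves (after a further enlargement of level to smooth the domain of integration), which is what Rudenko's Fact requires. You flag ``bookkeeping of the pushforward and coset sums'' as an expected obstacle, but this is not mere bookkeeping: without the fiber product your reduction to $r_2(f_1,f_2)\wedge\overline{\partial}\log|f_3|^2$ for \emph{meromorphic functions on a single curve} does not go through.

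On the other side of the ledger, you address two points the paper leaves silent. First, the tame-symbol and product-of-logarithm terms in the regulator identity: these genuinely appear in the paper's displayed reduction formula and are not of dilogarithm type, and your proposal to cancel them using the closed ``polygon'' structure of $\mathrm{CoRes}^2(\mathfrak{D})$ (a Parshin-type reciprocity of residues at the shared vertices) is exactly the kind of argument that is needed but never spelled out in the paper. Second, your observation that the relevant zeros and poles sit at cusps and at CM points, so that the arguments $\eta_j$ are built from singular moduli and hence are algebraic, is the correct justification of the word ``algebraic'' in the statement; the paper asserts it without comment. In short: same skeleton, one missing load-bearing construction (the fiber product of the two coverings), and two genuine additions where your write-up is more careful than the original.
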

\begin{proof} By definitions \ref{Cores_def_2}, \ref{Def_Cores_2}
$$ \mathrm{CoRes}^2(\mathfrak{D})=\mathrm{CoRes}^2(n, a, b) - \mathrm{CoRes}^2(m, a, c) +\mathrm{CoRes}^2(l, b, c), ~~{\mathrm{CoRes}^2}(n, a, b)=$$ $$=(\lambda(n)\times\rho(n))_*\left(\partial \log~ \left|J_{\Gamma_0(n)}(z)-J_{\Gamma_0(n)}(w) \right|^2\wedge   \partial\log~ \left|\frac{J_{\Gamma_0(n)}(z)-J_{\Gamma_0(n)}(a) }{J_{\Gamma_0(n)}(z)-J_{\Gamma_0(n)}(b)} \right|^2\right),
   %\left(d\loсозданаg~ \left|J_{\Gamma_0(n)}(z)-J_{\Gamma_0(n)}(a) \right|^2-d \log~ \left|J_{\Gamma_0(n)}(z)-J_{\Gamma_0(n)}(b) \right|^2\right), 
   $$
where the number in parenthesis of the maps $\lambda(n)$ and $\rho(n)$  denotes the level of the source of the maps. \\

For $\mathrm{CoRes}^2(\mathfrak{D})'$ we use another presentation with interchanging variables: 
$$ \mathrm{CoRes}^2(\mathfrak{D}') =\mathrm{ CoRes}^2(n', a', b')^T - \mathrm{CoRes}^2(m', a', c')^T +\mathrm{CoRes}^2(l', b', c')^T,$$ 
$${\mathrm{CoRes}^2}(n', a', b')^T =(\rho(n')\times\lambda(n'))_*\left(\partial \log~ \left|J_{\Gamma_0(n')}(w)-J_{\Gamma_0(n')}(z) \right|^2\right.\wedge $$
$$\left. \wedge ~ \partial\log~ \left|\frac{J_{\Gamma_0(n')}(w)-J_{\Gamma_0(n')}(a') }{J_{\Gamma_0(n')}(w)-J_{\Gamma_0(n')}(b')} \right|^2\right).
   %\left(d\log~ \left|J_{\Gamma_0(n)}(z)-J_{\Gamma_0(n)}(a) \right|^2-d \log~ \left|J_{\Gamma_0(n)}(z)-J_{\Gamma_0(n)}(b) \right|^2\right), 
   $$
Consider the  fiber product ${\mathcal Y}^{(2)}(n',n)$ of the coverings $(\rho(n')\times\lambda(n'))$ and $\lambda(n)\times\rho(n)$:
\begin{comment}
 \begin{equation} \begin{array}{ccccc}
   && {\mathcal Y}^{(2)}(n',n)\\
   &&\\
   &\stackrel{\pi(n')}{\swarrow}&&\stackrel{\pi(n)^T}{\searrow}&\\
   Y_0(n')\times Y_0(n')&&\downarrow \pi&&Y_0(n')\times Y_0(n')\\
&\stackrel{\rho(n')\times\lambda(n')}{\searrow}&&\stackrel{\lambda(n)\times\rho(n)}{\swarrow}&\\
  &&Y\times Y&&
 \end{array} \end{equation} 
\end{comment}
\begin{equation}
 \begin{tikzcd}
& {\mathcal Y}^{(2)}(n',n) \arrow{dr}{\pi(n)^T} \arrow[swap]{dl}{\pi(n')} \arrow{dd}{\pi} & \\
 Y_0(n')\times Y_0(n') \arrow[swap]{dr}{\rho(n')\times\lambda(n')} & & Y_0(n)\times Y_0(n)\arrow{dl}{\lambda(n)\times\rho(n)}\\
&Y\times Y&
\end{tikzcd}
\end{equation}
 
Consider the following functions on $ {\mathcal Y}^{2}(n',n)$:

$$F(n, a, b)=\pi(n)^*\left(J_{\Gamma_0(n)}(z)-J_{\Gamma_0(n)}(w) \right),$$
\begin{equation} f(n, a, b)=\pi(n)^*\left(\frac{J_{\Gamma_0(n)}(z)-J_{\Gamma_0(n)}(a) }{J_{\Gamma_0(n)}(z)-J_{\Gamma_0(n)}(b)}  \right),\end{equation}
$$G(n', a', b')=\left(\pi(n')^T\right)^*\left(J_{\Gamma_0(n')}(w)-J_{\Gamma_0(n')}(z) \right),$$
$$g(n', a', b')=\left(\pi(n')^T\right)^*\left(\frac{J_{\Gamma_0(n')}(w)-J_{\Gamma_0(n')}(a') }{J_{\Gamma_0(n')}(w)-J_{\Gamma_0(n')}(b')}  \right).
$$

Then
\begin{equation*}{\mathrm{CoRes}^2}(\frak{D}')\wedge \overline{\mathrm{CoRes}^2(\frak{D})}=\pi_*
\left(\partial\log|G|^2\wedge\partial\log|g|^2\wedge\overline {\partial}\log|F|^2\wedge\overline{\partial}\log|f|^2\right),
\end{equation*}
\begin{equation} d \left(r_2(G,g)\wedge\overline {\partial}\log|F|^2\wedge\overline{\partial}\log|f|^2\right)=
\end{equation}
\begin{gather*} =
\left(\partial\log|G|^2\wedge\partial\log|g|^2\wedge\overline {\partial}\log|F|^2\wedge\overline{\partial}\log|f|^2\right)
+\\+ 2\pi i\sum_C\mathrm{ord}_C (F)\left(r_2(G,g)\wedge\overline{\partial}\log|f|^2\right)|_C\delta_C - \\ - 2\pi i\sum_C\mathrm{ord}_C(f)\left(r_2(G,g)\wedge\overline{\partial}\log|G|^2\right)|_C\delta_C.
\end{gather*} 

So, by the Stocks formula, we reduce the integral to one-dimensional. Passing to finite covering by enlarging level we get smooth domain of integration. Now we apply result of Rudenko  mentioned in the subsection \ref{Fact_R}. 
\end{proof} 

 \vspace{0.5cm}

\textbf{Acknowledgments.} The article was prepared within the framework of the project “International academic cooperation” HSE University.

 \vspace{0.5cm}

 \textsc{National Research University Higher School of Economics,}
 
 \textsc {Laboratory of Mirror Symmetry, NRU HSE,}
 
 \textsc {6 Usacheva str., Moscow, Russia, 119048}\par\nopagebreak
  \textit{Email addresses:} A. Levin: \texttt{alevin@hse.ru},  N.~Sakharova: \texttt{nsakharova@hse.ru}

\newpage
\appendix \section{Auxiliary lemma Concerning the Modular Cauchy kernel.}

Denote by 
	\begin{equation}
	   \Xi_n(z, w) = \frac{1}{2} \sum_{\gamma \in {\rm Mat}'_n}      
            \frac{\overline{\mu_\gamma (z, w)}\overline{\mu_\gamma(z,
		\bar{w})}(\bar{w} - w)}{|\mu(z, w)|^{2s}|\mu_\gamma(z,                  \bar{w})|^{2s}}.  \label{Xin}
	\end{equation}
The differential form ${\mathcal C}_n(z, w)=~\Xi_n(z,w) dz+~\Xi_n(w, z) dw \in \Omega^{1,0}_{Y^2}\left(\mathrm{log}T_n\right). $

\begin{lemma} \label{lm_2}  Let $W_n= \left(\begin{matrix} 0& 1 \\ -n & 0 \end{matrix} \right)$ be the matrix of the Fricke involution and $\alpha_i, \beta_j \in \Gamma_0(n) \backslash \Gamma$ are the representatives of right cosets, then
\begin{equation} 
\Xi_1(z, w) = \sum_{\alpha_i \in \mathcal{A}(n)} \Xi_0(n) (\alpha_i z, w) j^{-2}(\alpha_i, z)  ~~\text{and} \label{Xi_descent1} 
\end{equation}
\begin{equation} 
\Xi_n(z, w) = \sum_{\alpha_i \in \mathcal{A}(n)} \sum_{\beta_j \in \mathcal{A}(n)} \Xi_0(n) (\alpha_i z, W_n \beta_j w) j^{-2}(\alpha_i, z) j^{-2}(W_n \beta_j, w).\label{Xi_descent2} 
\end{equation}
\end{lemma}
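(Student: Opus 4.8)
The plan is to reduce both identities to two multiplicative ``cocycle'' relations for $\mu$, a decomposition of the index set of the larger sum, and a passage to $s\to 1$. I would carry out every reorganization first in the region of absolute convergence ($\Re s$ large), where it is legitimate termwise; since both sides continue to $s=1$ by \cite{Sa15}, \cite{Sa19}, the stated identities then follow by continuity. Note that for generic $s$ the transition factors are monomials in $j$ and $\overline{j}$ that only collapse to the clean powers $j^{-2}$ in the limit, so the whole argument is really ``identity at large $\Re s$, then specialize''.

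The two relations I would record, both immediate from $\mu_\gamma(z,w)=(1,-z)\gamma^\ast\binom{w}{1}$, the anti-homomorphism property $(\delta^\ast\gamma)^\ast=\gamma^\ast\delta$, and $\binom{\delta w}{1}=j(\delta,w)^{-1}\delta\binom{w}{1}$, are
\begin{equation*}
\mu_{\gamma\delta}(z,w)=j(\delta,z)\,\mu_\gamma(\delta z,w),\qquad \mu_\gamma(z,\delta w)=j(\delta,w)^{-1}\mu_{\delta^\ast\gamma}(z,w).
\end{equation*}
The second holds for any integral $\delta$, in particular for $\delta=W_n\beta_j$ with $\det\delta=n$. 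I also record the reflection formula $\delta\bar w-\delta w=\det(\delta)\,(\bar w-w)\,j(\delta,\bar w)^{-1}j(\delta,w)^{-1}$, which is the only place the determinant $n$ enters.

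For \eqref{Xi_descent1} I would split $\Gamma=\bigsqcup_i\Gamma_0(n)\alpha_i$ and write each $\gamma=h\alpha_i$ with $h\in\Gamma_0(n)$. The first relation turns every occurrence of $\mu_{h\alpha_i}$ into $j(\alpha_i,z)\,\mu_h(\alpha_i z,\cdot)$, so the summand of $\Xi_1(z,w,s)$ factors as $\overline{j(\alpha_i,z)}^{\,2}\,|j(\alpha_i,z)|^{-4s}$ times the summand of $\Xi_0(n)(\alpha_i z,w,s)$ indexed by $h$. Summing over $h$ and then over $i$ gives the identity for all such $s$, and at $s=1$ the prefactor collapses to $j(\alpha_i,z)^{-2}$; the specialization $n=1$, where $\mathcal{A}(1)=\{I\}$, is an immediate consistency check.

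For \eqref{Xi_descent2} the combinatorial core is the bijection
\begin{equation*}
\Gamma_0(n)\times\mathcal{A}(n)\times\mathcal{A}(n)\ \longrightarrow\ \mathrm{Mat}'_n,\qquad (h,\alpha_i,\beta_j)\longmapsto (W_n\beta_j)^\ast\,h\,\alpha_i,
\end{equation*}
which I would verify from the $\Gamma\times\Gamma$-orbit description of $\mathrm{Mat}'_n$ recalled in Section~2: writing the image in the form $g_1W_ng_2^{-1}$ yields $g_2=\alpha_i^{-1}$ ranging over a fixed transversal while $g_1$ ranges freely over $\Gamma$, and the stabilizer $\{(g^\diamond,g):g\in\Gamma_0(n)\}$ of $W_n$ meets this set exactly once per orbit. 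Granting the bijection, both cocycle relations convert $\mu_M(z,\cdot)$ with $M=(W_n\beta_j)^\ast h\alpha_i$ into $\mu_h(\alpha_i z,W_n\beta_j\cdot)$ up to powers of $j(\alpha_i,z)$ and $j(W_n\beta_j,\cdot)$, the reflection factor supplies the $n$, and summing over $h$ produces $\Xi_0(n)(\alpha_i z,W_n\beta_j w,s)$ times an explicit monomial in $j$ and $\overline{j}$. The delicate point — and the step I expect to be the main obstacle — is the bookkeeping of these three interacting factor sources (the two cocycle relations and the reflection factor $\bar w-w$ carrying $\det(W_n\beta_j)=n$), together with the requirement that the conjugate argument $\bar w$ be transformed by the same matrices as $w$; one must then check that the $s\to1$ limit of the resulting monomial is precisely the weight-two factor asserted. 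I would pin down the overall normalization by testing $n=1$, where $W_1\in\Gamma$ and the weight-zero invariance of $\Xi_0(1)$ in its second slot forces the constant, which is exactly the step most prone to leaving a spurious power of $n$ or of $j(W_n\beta_j,w)$.
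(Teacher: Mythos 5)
The paper gives no argument here beyond declaring \eqref{Xi_descent1} standard and \eqref{Xi_descent2} a direct consequence of the Smith normal form, and your plan is precisely the fleshed-out version of that: the coset decomposition $\Gamma=\bigsqcup_i\Gamma_0(n)\alpha_i$ for the first identity, the Smith-normal-form parametrization $(h,\alpha_i,\beta_j)\mapsto (W_n\beta_j)^{\ast}h\alpha_i$ of ${\rm Mat}'_n$ for the second, and the two $\mu$-cocycle relations plus the reflection formula (all three of which you state correctly) to transport the summands; working at large $\Re s$ and specializing is also the right way to legitimize the rearrangement. One substantive warning about the bookkeeping you defer: since $\Xi_0(n)$ has weight $0$ in its second argument, collecting your factors at $s=1$ yields the prefactor $\tfrac1n\,j^{-2}(\alpha_i,z)$ rather than $j^{-2}(\alpha_i,z)\,j^{-2}(W_n\beta_j,w)$ — the two differ by the Jacobian $\tfrac{d(W_n\beta_j w)}{dw}=n\,j^{-2}(W_n\beta_j,w)$, and indeed the right-hand side of \eqref{Xi_descent2} as printed does not transform with weight $0$ in $w$ while the left-hand side does. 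So the consistency check you propose at the end is not optional: carrying it out will show that your computation is the internally coherent one and that the extra factor in the stated formula should be read as the form-level Jacobian absorbed into the $w$-slot (as in the explicit expression for $\mathrm{CoRes}^1$ in the body of the paper), not as a literal function identity.
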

The proof of the first equality is quite standard. The second statement is a direct consequence of Smith's normal form.

\end{document}